\newtheorem{theorem}{Theorem}
\newtheorem{lemma}[theorem]{Lemma}
\newtheorem{proposition}[theorem]{Proposition}
\newtheorem{definition}[theorem]{Definition}
\numberwithin{equation}{section}
\begin{document}

\newcommand{\cc}{\mathfrak{c}}
\newcommand{\N}{\mathbb{N}}
\newcommand{\C}{\mathbb{C}}
\newcommand{\Q}{\mathbb{Q}}
\newcommand{\R}{\mathbb{R}}
\newcommand{\T}{\mathbb{T}}
\newcommand{\st}{*}
\newcommand{\PP}{\mathbb{P}}
\newcommand{\lin}{\left\langle}
\newcommand{\rin}{\right\rangle}
\newcommand{\SSS}{\mathbb{S}}
\newcommand{\forces}{\Vdash}
\newcommand{\dom}{\text{dom}}
\newcommand{\osc}{\text{osc}}

\author{Tristan Bice}
\address{Federal University of Bahia, Salvador, Brazil}
\email{\texttt{Tristan.Bice@gmail.com}}
\thanks{Part of the research leading to the results of
this paper was  conducted with support of
the grant PVE Ci\^encia sem Fronteiras - CNPq (406239/2013-4) while the 
first named author was visiting the Univeristy of
S\~ao Paulo in December 2015. We would like to thank
Christina Brech for organizing the visit.} 
\thanks{The first named author was supported by an IMPA (Brazil) post-doctoral fellowship.}

\author{Piotr Koszmider}
\address{Institute of Mathematics, Polish Academy of Sciences,
ul. \'Sniadeckich 8,  00-656 Warszawa, Poland}
\email{\texttt{piotr.koszmider@impan.pl}}
\thanks{The second named author was  supported at the University of by S\~ao Paulo by  grant
PVE Ci\^encia sem Fronteiras - CNPq (406239/2013-4).} 

\subjclass[2010]{46L05, 03E75}
\title{A note on the Akemann-Doner and Farah-Wofsey constructions}

\begin{abstract} 
We  remove the assumption of
the continuum hypothesis from the Akemann-Doner construction of a non-separable $C^*$-algebra $A$ with only separable commutative $C^*$-subalgebras.  We also extend a result of Farah and Wofsey's, constructing $\aleph_1$ commuting projections in the Calkin algebra with no commutative lifting.  This
removes the assumption of the continuum hypothesis from a version of a result of Anderson.
Both results are based on Luzin's almost disjoint family construction.
\end{abstract}

\maketitle

\section*{Background}

Recall that an almost disjoint family is a family $\mathcal F$ of infinite subsets 
of $\N$ such that $A\cap B$ is finite for any distinct $A, B\in \mathcal F$. 
Uncountable almost disjoint families, known already to Hausdorff, Luzin and Sierpi\'nski
in the second decade of the 20th century carry sophisticated combinatorics.
Since the times of  Alexandroff and Urysohn's memoir \cite{urysohn} this combinatorics
has been employed in constructions of interesting mathematical structures.
Applications in topology include, for example, compact spaces of countable tightness which
are not Frechet, or  two Frechet compact spaces whose product is not Frechet (for
a recent survey see \cite{hrusak}). The use of almost disjoint families
in Banach space theory was initiated by Johnson and Lindenstrauss in \cite{johnson}
and followed by many authors (e.g., \cite{rosenthal}, \cite{mrowka}) 

In \cite{akemann-doner}, Akemann and Doner considered $C^*$-subalgebras
 of the $C^*$-algebra $\ell_\infty(\mathcal M_2)$ of bounded sequences
of $2\times2$ complex matrices obtained from almost disjoint families.  
Assuming the continuum hypothesis (abbreviated later as CH)  they
constructed an uncountable almost disjoint family which yielded
 the first example of a non-separable 
$C^*$-algebra with only separable commutative $C^*$-subalgebras.  

Later Popa (see \cite{popa} Corollary 6.7) proved that the reduced 
$C^*$-algebra of an uncountable free group is an example of such 
$C^*$-algebra whose existence does not require CH or any other 
set-theoretic assumption beyond the usual
axioms ZFC.  

We show in Theorem \ref{Akemann-DonerConstruction} 
that CH can in fact already be removed from the 
Akemann-Doner construction by considering a so-called Luzin family 
(see \cite{luzin}), putting it on a more equal footing with Popa's example.  
Indeed, while Popa's example is highly non-commutative (being simple, for example), 
the Akemann-Doner example is barely non-commutative (being $2$-subhomogeneous, for example).  
Thus we see that, even in ZFC, a $C^*$-algebra can be nearly commutative and 
yet only have small commutative $C^*$-subalgebras. Our version
of the Akemann-Doner construction has many other interesting features which
are the consequence of its relative elementarity, for example it is  
a subalgebra $A$ of the algebra $\mathcal B(H)$ of all bounded operators  on
a separable Hilbert space  $H$ which includes 
a separable ideal $J=A\cap \mathcal K(H)$, where
$\mathcal K(H)$  denotes the ideal of compact operators on $H$,
such that its quotient $A/J$ by $J$ is the commutative $C^*$-algebra $c_0(\omega_1)$
of all continuous functions on the discrete uncountable space $\omega_1$ vanishing at 
the infinity. In particular it is a scattered $C^*$-algebra in the sense of \cite{jensen}, while Popa's
example, as a simple $C^*$-algebra, has the opposite properties, for example has no minimal projections.

The second application of Luzin's family which we present in this note 
is related to a topic 
concerning the Calkin algebra $\mathcal B(H)/\mathcal K(H)$ of
bounded operators on the separable Hilbert space $H$
modulo the ideal of compact operators on $H$. This topic
can be traced back to the paper \cite{anderson-pathology} of
Anderson where assuming CH he constructed 
a maximal selfadjoint abelian  subalgebra (masa) of $\mathcal B(H)/\mathcal K( H)$
which cannot be lifted to a masa in $\mathcal B(H)$. 
In his proof Anderson constructed under CH an uncountable family $\mathcal P$ of commuting projections
in the Calkin algebra such that
no uncountable $\mathcal P_1\subseteq \mathcal P$ 
can be lifted to a family of commuting projections in
 $\mathcal B(H)$\footnote{If $\mathcal P$ is the almost central collection of projections from Theorem
4 of \cite{anderson-pathology}, then any of its uncountable
subsets is almost central as well. If an almost central
collection $\mathcal P$ could be lifted to a commuting collection
of projections $\mathcal P'$ in $\mathcal B(H)$, one could consider in $\mathcal B(H)$ a masa
$\mathcal A\supseteq \mathcal P'$. But then, by a theorem of 
Johnson and Parrott from \cite{johnson-parrott}, the algebra $\pi[\mathcal A]$ 
would be a masa in the Calkin algebra containing $\mathcal P$ and lifting
to  a masa in $\mathcal B$ which would contradict Proposition 3 of \cite{anderson-pathology}.}.

Echoing Luzin's construction, in Theorem 5.35 of \cite{farah-wofsey}, 
Farah and Wofsey constructed an $\aleph_1$-sized family of projections 
$\mathcal{P}$ in the Calkin algebra $\mathcal{C}(H)$ which can not be 
simultaneously diagonalized.  In fact, the proof shows that $\pi[A]\cap\mathcal{P}$ is countable,
 for all $C^*$-subalgebras $A$ of $\mathcal{B}(H)$ isomorphic to $l_\infty$, where $\pi$ is the 
canonical homomorphism from $\mathcal{B}(H)$
onto the Calkin algebra $\mathcal{C}(H)=\mathcal{B}(H)/\mathcal{K}(H)$.  
They conjectured that this could be extended to arbitrary commutative $C^*$-subalgebras 
$A$ of $\mathcal{B}(H)$.     Our main result
of Section 3, Theorem \ref{Farah-WofseyConstruction} proves this conjecture
and removes the assumption of CH from the above version of 
 Anderson's result. That is, without any additional
set-theoretic assumptions we construct in the Calkin algebra
an uncountable family $\mathcal P$ of commuting projections
 such that
no uncountable $\mathcal P_1\subseteq \mathcal P$ 
can be lifted to a family of commuting projections in
 $\mathcal B(H)$. We would like to thank both Ilijas Farah and Joerg Brendle for various discussions related to this 
part of our work. 

We should mention that Akemann and Weaver noted at the end of \cite{akemann-weaver} that, regardless of CH, there must be $2^{2^{\aleph_0}}$ masas in the Calkin algebra which do not lift to masas in $\mathcal{B}(H)$.  However, these masas may not be generated by projections, as in Anderson's construction.  They also have density $2^{\aleph_0}$, while the key point of our construction is that only $\aleph_1$ projections suffices.  Also Luzin families have  been used recently to construct subalgebras of $\ell_\infty(M_2)$ with other interesting properties \textendash\, see \cite{choi-farah-ozawa} and \cite{vignati}.

\section{Luzin Families}

\begin{definition} A Luzin family is an almost disjoint family $\mathcal{L}=\{D_\alpha: \alpha<\omega_1\}$ such that, for every $\alpha<\omega_1$ and every $k\in \N$, the following set is finite.
$$\{\beta<\alpha: D_\beta\cap D_\alpha\subseteq \{0, ..., k\}\}$$
\end{definition}

For a ZFC construction of a Luzin family, see \cite{vandouwen} Theorem 4.1. or \cite{hrusak} Theorem 3.1.

Whenever $A, B\subseteq \N$, then we write respectively $A\subseteq^* B$ or $A\cap B=^*\emptyset$ if $A\setminus B$ is finite or $A\cap B$ is finite.  For the convenience of the reader let us recall a fundamental property of a Luzin family:

\begin{proposition}\label{luzin-property}
Suppose that $\mathcal{L}$ is a Luzin family and $\mathcal{L}', \mathcal{L}''\subseteq \mathcal{L}$ are uncountable and disjoint.  Then there is no $A\subseteq \N$ such that, for all $D'\in \mathcal{L}'$ and $D''\in \mathcal{L}''$,
$$D'\subseteq^* A\ \hbox{and}\ D''\cap A=^*\emptyset.$$
\end{proposition}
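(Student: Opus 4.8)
The plan is to argue by contradiction: suppose such a set $A\subseteq\N$ exists. First I would unwind the two almost-inclusion hypotheses into honest statements about finite sets. For each $D'\in\mathcal{L}'$ the set $D'\setminus A$ is finite, so there is some $n(D')\in\N$ with $D'\setminus\{0,\dots,n(D')\}\subseteq A$; likewise, for each $D''\in\mathcal{L}''$ the set $D''\cap A$ is finite, so there is some $m(D'')\in\N$ with $D''\cap A\subseteq\{0,\dots,m(D'')\}$.

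Next comes the key uniformization step. Since $\mathcal{L}'$ and $\mathcal{L}''$ are uncountable while the maps $D'\mapsto n(D')$ and $D''\mapsto m(D'')$ take values in the countable set $\N$, the pigeonhole principle yields a single $k\in\N$ together with uncountable subfamilies $\mathcal{L}'_0\subseteq\mathcal{L}'$ and $\mathcal{L}''_0\subseteq\mathcal{L}''$ such that $D'\setminus\{0,\dots,k\}\subseteq A$ for every $D'\in\mathcal{L}'_0$ and $D''\cap A\subseteq\{0,\dots,k\}$ for every $D''\in\mathcal{L}''_0$. Now for any $D'\in\mathcal{L}'_0$ and $D''\in\mathcal{L}''_0$, a common element $x\in D'\cap D''$ with $x>k$ would lie in $A$ (being in $D'$ beyond $k$) and hence in $D''\cap A\subseteq\{0,\dots,k\}$, a contradiction; therefore $D'\cap D''\subseteq\{0,\dots,k\}$ for all such pairs, with one fixed $k$.

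It remains to contradict the Luzin property using this single $k$. Writing $S',S''\subseteq\omega_1$ for the index sets of $\mathcal{L}'_0$ and $\mathcal{L}''_0$, both are uncountable and hence unbounded in $\omega_1$, and they are disjoint since $\mathcal{L}'$ and $\mathcal{L}''$ are. I would pick an ordinal $\gamma$ above the first $\omega$ elements of $S'$, so that $S'\cap\gamma$ is infinite, and then use unboundedness of $S''$ to choose some $\alpha\in S''$ with $\alpha>\gamma$. Then $D_\alpha\in\mathcal{L}''_0$, and for every $\beta\in S'\cap\alpha$ we have $D_\beta\in\mathcal{L}'_0$ with $\beta<\alpha$ and $D_\beta\cap D_\alpha\subseteq\{0,\dots,k\}$; thus the set $\{\beta<\alpha: D_\beta\cap D_\alpha\subseteq\{0,\dots,k\}\}$ contains the infinite set $S'\cap\alpha$, directly contradicting the definition of a Luzin family.

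I expect the main subtlety to be the uniformization rather than the final index-chasing. The Luzin condition only controls intersections against a \emph{fixed} finite initial segment $\{0,\dots,k\}$, so the crux is passing from the per-set finite bounds $n(D')$ and $m(D'')$ to a single $k$ on uncountable subfamilies; once that is done, aligning the indices to invoke the finiteness clause in the definition is routine.
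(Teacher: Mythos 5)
Your proof is correct and takes essentially the same route as the paper's: pigeonhole the per-set finite bounds down to a single $k$ on uncountable subfamilies, then find one index $\alpha$ with infinitely many indices of the other subfamily below it, contradicting the finiteness clause in the definition of a Luzin family. The only (immaterial, by symmetry) difference is that you take $\alpha$ from the $\mathcal{L}''$ side with the infinitely many $\beta$'s coming from $\mathcal{L}'$, whereas the paper does the reverse.
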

\begin{proof} If there is such an $A\subseteq \N$ then $X'=\{\alpha<\omega:D_\alpha\setminus\{1, ..., k'\}\subseteq A\}$ is uncountable, for some $k'\in \N$.  Likewise $X''=\{\alpha<\omega:D_\alpha\setminus\{1, ..., k'\}\cap A=\emptyset\}$ is uncountable, for some $k''\in \N$.
Let $k=\max(k', k'')$ and take $\alpha\in X'$ such that $X''\cap\alpha$ is infinite. 
But $D_\beta\cap D_\alpha\subseteq \{1, ..., k\}$ for every $\beta\in X''\cap\alpha$
which contradicts the definition of a Luzin family.
\end{proof}

\section{The Akemann-Doner Construction}

First note the following elementary $C^*$-algebra result.

\begin{lemma}\label{commuting||p-q||<1}
If projections $p$ and $q$ commute and $||p-q||<1$ then $p=q$.
\end{lemma}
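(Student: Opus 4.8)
The plan is to exploit the fact that commuting projections can be simultaneously viewed through a common spectral/functional-calculus lens, so that the norm condition forces them to coincide. First I would observe that since $p$ and $q$ commute, the difference $p-q$ is a self-adjoint element of the commutative $C^*$-algebra $C^*(p,q,1)$ generated by $p$, $q$ and the unit. By Gelfand duality this algebra is isomorphic to $C(X)$ for some compact Hausdorff space $X$ (in fact $X$ is finite here, being a quotient of the four-point space coming from the two commuting projections), and under this isomorphism $p$ and $q$ correspond to $\{0,1\}$-valued continuous functions, i.e. to characteristic functions of clopen subsets of $X$.

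The key computation is then purely pointwise. If $p$ corresponds to $\chi_U$ and $q$ to $\chi_V$, then $p-q$ corresponds to $\chi_U-\chi_V$, whose values lie in $\{-1,0,1\}$. The norm $\|p-q\|$ equals the sup-norm of this function, so $\|p-q\|<1$ forces the function to vanish identically, giving $\chi_U=\chi_V$ and hence $p=q$. I would present this equivalently and more elementarily by noting that $r=p-q$ is self-adjoint and satisfies $r^3=r$: indeed, using $p^2=p$, $q^2=q$ and $pq=qp$ one checks directly that $(p-q)^3=p-q$. Hence the spectrum of $r$ is contained in $\{-1,0,1\}$, so $\|r\|<1$ forces $\sigma(r)\subseteq\{0\}$; since $r$ is self-adjoint (hence normal), $\|r\|=\max\{|\lambda|:\lambda\in\sigma(r)\}$, and this maximum being $0$ yields $r=0$.

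The verification of the algebraic identity $(p-q)^3=p-q$ is the only real step, and it is entirely routine: expanding and repeatedly applying idempotence of $p$ and $q$ together with commutativity collapses all the cross terms. I expect no genuine obstacle here; the main point to get right is simply that commutativity is exactly what makes $p-q$ normal (so that the norm is governed by the spectrum) and what makes the cubic identity hold. Without commutativity neither the functional-calculus argument nor the cubic identity goes through, which is why the hypothesis is essential and the conclusion can fail for non-commuting projections.
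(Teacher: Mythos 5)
Your proof is correct, but it takes a genuinely different route from the paper's. You stay inside the commutative $C^*$-algebra $C^*(p,q,1)$ and argue spectrally: either via Gelfand duality (the commuting projections become characteristic functions, so $p-q$ takes values in $\{-1,0,1\}$), or, equivalently and more elementarily, via the identity $(p-q)^3=p-q$, which holds precisely because $p$ and $q$ are commuting idempotents; since $p-q$ is self-adjoint, its norm equals its spectral radius, and a spectrum contained in $\{-1,0,1\}$ together with $\|p-q\|<1$ forces $p-q=0$. The paper instead invokes the Gelfand--Naimark representation theorem to realize $p,q$ as projections on a Hilbert space, writes $p-q=pq^\perp-p^\perp q$, observes that these two operators have orthogonal ranges and cokernels, so $\|p-q\|=\max(\|pq^\perp\|,\|p^\perp q\|)$, and then uses commutativity to see that $pq^\perp$ and $p^\perp q$ are projections, hence of norm $0$ or $1$, hence zero. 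Your argument is more self-contained (no Hilbert space representation needed, only basic spectral theory in a $C^*$-algebra), while the paper's decomposition has a side benefit within the paper: the formula $\|p-q\|=\max(\|pq^\perp\|,\|p^\perp q\|)$, valid for arbitrary projections on a Hilbert space, is reused immediately after the lemma to derive the identity \eqref{pq} for rank-one projections in $\mathcal M_2$. One small correction to your closing remark: commutativity is \emph{not} what makes $p-q$ normal --- the difference of any two projections is self-adjoint, commuting or not. What commutativity buys is exactly the cubic identity (equivalently, $\sigma(p-q)\subseteq\{-1,0,1\}$); without it the spectrum can fill $[-1,1]$, which is why the conclusion fails for, say, two rank-one projections in $\mathcal M_2$ at a small positive angle.
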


\begin{proof} Using the standard argument based on the Gelfand-Naimark theorem
we will be working with $p$ and $q$ as with projections on some Hilbert space.
Defining $p^\perp=1-p$, we see that
\begin{equation*}
||p-q||=||p-pq+pq-q||=||pq^\perp-p^\perp q||=\max(||pq^\perp||,||p^\perp q||),
\end{equation*}
as $(pq^\perp)(p^\perp q)^*=pq^\perp qp^\perp=p0p^\perp=0$ and $(pq^\perp)^*(p^\perp q)=q^\perp pp^\perp q=q^\perp 0q=0$, i.e. $pq^\perp$ and $p^\perp q$ have orthogonal range and cokernel,
so the norm of their sum is the maximum of the norms.  As $pq=qp$, $pq^\perp$ is a projection so $||pq^\perp||=0$ or $1$ which, as $||p-q||<1$, means $pq^\perp=0$, i.e. $p=pq$.  Likewise, $p^\perp q$ is a projection so $q=pq$.
\end{proof}

In fact, for any $p,q\in\mathcal{P}^1=$ the rank one projections in $\mathcal M_2$, we have
\[||p-q||=||pq^\perp||=||p^\perp q||.\]
Thus using the fact that for any $a$ in the algebra  $||ap||$ is the supremum of all $||av||$ taken over unit vectors
 $v$ in the range of $p$, for any unit vector $v\in\mathcal{H}_2$ with $pv=v$ we have
\begin{equation}\label{pq}
1=||v||=||qv||^2+||q^\perp v||^2=||pq||^2+||pq^\perp||^2=||p-q^\perp||^2+||p-q||^2,
\end{equation}
since $||pq||=||(pq)^*||=||qp||$ for any two projections as the involution is isometric. We now make the following assumptions.

\begin{definition}\label{definition-main} $ $
\begin{itemize}
\item $\mathbf{p}\in\mathcal{P}^1$ is fixed throughout.
\item $\mathcal{L}$ is a Luzin almost disjoint family on $\mathbb{N}$.
\item $(p_D)_{D\in\mathcal{L}}\subseteq\mathcal{P}^1$ are distinct with $||p_D-\mathbf{p}||<\frac{1}{4}$, for all $D\in\mathcal{L}$.
\item $\pi$ is the canonical homomorphism from $\ell_\infty(\mathcal M_2)$ to $\ell_\infty(\mathcal M_2)/c_0(\mathcal M_2)$.
\item For $D\subseteq\N$, $P_D$ denotes the central projection in $\ell_\infty(\mathcal M_2)$ defined by
\begin{equation*}
P_D(n) = \begin{cases}
                 1             & n \in D\\
                 0             & n \notin D.\\
           \end{cases}
\end{equation*}
\item Elements of $\mathcal M_2$ are identified with constant functions in $\ell_\infty(\mathcal M_2)$.
\item $L$ is the $C^*$-subalgebra of $\ell_\infty(\mathcal M_2)$ generated by $(p_DP_D)_{D\in\mathcal{L}}$ and $c_0(\mathcal M_2)$.
\end{itemize}
\end{definition}

\begin{theorem}\label{Akemann-DonerConstruction}
$L$ is non-separable but only has separable commutative $C^*$-subalgebras.
\end{theorem}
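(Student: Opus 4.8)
The plan is to treat the two assertions separately: non-separability is elementary, while the structural claim is where the Luzin property of $\mathcal L$ does the work. For non-separability I would observe that $\{p_DP_D:D\in\mathcal L\}$ is an uncountable, pairwise $1$-separated subset of $L$. Each $p_DP_D$ is a projection, and for distinct $D,E$ almost disjointness makes $D\setminus E$ infinite, so at any $n\in D\setminus E$ we have $(p_DP_D-p_EP_E)(n)=p_D$, whence $\|p_DP_D-p_EP_E\|\ge\|p_D\|=1$; since the difference of two projections has norm at most $1$, equality holds. An uncountable $1$-separated set rules out separability.

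For the structural claim I would first identify the quotient. For distinct $D,E$ the intersection $D\cap E$ is finite, so $P_{D\cap E}\in c_0(\mathcal M_2)$ and hence $q_Dq_E:=\pi(p_DP_D)\pi(p_EP_E)=\pi(p_Dp_EP_{D\cap E})=0$, while each $q_D$ is a nonzero projection. Thus $\pi[L]$ is generated by the mutually orthogonal projections $(q_D)_{D\in\mathcal L}$ and so is isomorphic to $c_0(\omega_1)$. Given a commutative $C^*$-subalgebra $B\subseteq L$, the ideal $B\cap c_0(\mathcal M_2)$ is separable, so by the standard fact that an extension of a separable $C^*$-algebra by a separable ideal is separable, it suffices to prove that $\pi[B]$ is separable, and I would argue by contradiction, assuming it is not.

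The heart of the argument combines commutativity with the geometry fixed in Definition \ref{definition-main}. Writing $\phi_D=\chi_D\circ\pi$ for the character $\phi_D(b)=\lim_{n\in D}\langle b(n)v_D,v_D\rangle$ (with $\chi_D$ the $D$-th coordinate evaluation on $c_0(\omega_1)$, $v_D$ a unit vector with $p_D=v_Dv_D^*$ and $\mathbf v$ the unit vector with $\mathbf p=\mathbf v\mathbf v^*$), non-separability of $\pi[B]$ forces the support $\Gamma=\{D:\phi_D(b)\ne0\text{ for some }b\in B\}$ to be uncountable, since otherwise $\pi[B]\subseteq c_0(\Gamma)$ would be separable. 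Commutativity enters by simultaneous diagonalization: for each $n$ the normal matrices $\{b(n):b\in B\}$ commute, giving a rank-one projection $e_n$ with every $b(n)\in\C e_n+\C e_n^\perp$. For $D\in\Gamma$ I would pick a self-adjoint witness $b$ with $\phi_D(b)\ne0$; multiplicativity $\phi_D(b^2)=\phi_D(b)^2$ together with Cauchy--Schwarz yields $\|b(n)v_D\|^2-\langle b(n)v_D,v_D\rangle^2\to0$ along $D$, so $v_D$ is asymptotically an eigenvector of $b(n)$. Since $\|p_D-\mathbf p\|<\frac14$ forces $|\langle v_D,\mathbf v\rangle|^2>\frac{15}{16}$ through \eqref{pq}, the eigenline $\sigma_n$ of the common basis nearest $\mathbf p$ is the one approaching $v_D$, and the goal is to upgrade the above to $\sigma_n\to p_D$ (in norm) along $D$, giving $\operatorname{tr}(w\sigma_n)\to\operatorname{tr}(wp_D)$ along $D$ for every self-adjoint $w\in\mathcal M_2$.

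To finish, I would use that the $p_D$ are distinct, so $\{p_D:D\in\Gamma\}$ is an uncountable subset of the sphere of rank-one projections: choosing $w$ with $D\mapsto\operatorname{tr}(wp_D)$ non-constant and a value $c$ with a gap on both sides, I obtain disjoint uncountable subfamilies $\mathcal L'=\{D:\operatorname{tr}(wp_D)>c\}$ and $\mathcal L''=\{D:\operatorname{tr}(wp_D)<c\}$. Setting $A=\{n:\operatorname{tr}(w\sigma_n)>c\}$, the convergence $\sigma_n\to p_D$ gives $D\subseteq^* A$ for $D\in\mathcal L'$ and $D\cap A=^*\emptyset$ for $D\in\mathcal L''$, contradicting Proposition \ref{luzin-property}. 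The step I expect to be the main obstacle is exactly the upgrade $\sigma_n\to p_D$: the asymptotic-eigenvector relation degenerates at indices $n\in D$ where $b(n)$ is nearly scalar (eigenvalues nearly equal), so a single witness need not pin $e_n$ down. I would address this by arranging genuinely non-scalar witnesses --- for instance projection witnesses extracted from $\pi[B]$ via functional calculus, for which $b(n)$ is a rank-one projection along $D$ and hence bounded away from the scalars, where Lemma \ref{commuting||p-q||<1} guarantees the distinct $p_D$ do not commute and so are truly detected --- and then shrinking $\Gamma$ to an uncountable subfamily on which the eigenvalue gap is uniform, so that the negligible scalar indices form a finite subset of each $D$ and leave the almost-inclusions intact.
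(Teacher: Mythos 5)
Your proposal is correct once the ``fix'' in your last paragraph is incorporated, and its overall architecture coincides with the paper's: identify $\pi[L]$ as a $c_0$-sum of the orthogonal projections $\pi(p_DP_D)$, deduce that a non-separable commutative $B\subseteq L$ has uncountable support $\Gamma$, simultaneously diagonalize $\{b(n):b\in B\}$ at each coordinate to get rank-one projections $e_n$, use the $\mathbf{p}$-normalization to select $\sigma_n$, prove $\sigma_n\to p_D$ along each $D\in\Gamma$, and then split $\Gamma$ into two separated uncountable subfamilies to contradict Proposition \ref{luzin-property}. The genuine difference is how the key convergence $\sigma_n\to p_D$ is obtained, and here the paper's route is worth knowing: it never inspects eigenvalues of witnesses at all. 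Given $a\in B$ with nonzero coefficient $\lambda^D_a$, the paper pushes $aq=qa$ (where $q(n)=e_n$) into the quotient and compresses by the central projection $\pi(P_D)$, getting that the two \emph{projections} $\pi(qP_D)$ and $\pi(p_DP_D)$ commute; since the normalization puts them at distance $<\tfrac{1}{\sqrt 2}+\tfrac14<1$, Lemma \ref{commuting||p-q||<1} forces them to be \emph{equal}, which is exactly $\|q(n)-p_D\|\to 0$ along $D$. This sidesteps entirely the degeneracy obstacle you identified (witnesses $b(n)$ that are nearly scalar), which is a real flaw in your primary Cauchy--Schwarz/asymptotic-eigenvector argument. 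Your repair --- extracting via functional calculus a $b\in B$ with $\pi(b)$ a projection taking value $1$ at $D$, so that $b(n)\to p_D$ in norm along $D$ and the eigenvalue gap tends to $1$ --- is sound, and in fact no shrinking of $\Gamma$ to uniformize the gap is needed, since the gap is asymptotically $1$ along every $D$; but it costs you a functional-calculus extraction plus a spectral-perturbation step where the paper needs only Lemma \ref{commuting||p-q||<1}. One smaller gloss: in your finish, ``non-constant'' is not enough to make both $\mathcal L'$ and $\mathcal L''$ uncountable; as in the paper you should take two distinct condensation points $r,s$ of the uncountable set $\{p_D:D\in\Gamma\}$ (these exist because $\mathcal P^1$ is separable metric and the $p_D$ are distinct) and choose $w$ with $\operatorname{tr}(wr)\neq\operatorname{tr}(ws)$, after which your trace-cut set $A$ plays exactly the role of the paper's set $X$.
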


\begin{proof} As $\pi(p_DP_D)$ is an uncountable pairwise orthogonal collection of projections, $\pi[L]$ and hence $L$ is nonseparable.  This also means, for any $a\in L$,
\[\pi(a)=\sum\lambda^D_a\pi(p_DP_D)\]
for unique $(\lambda^D)_{D\in\mathcal{L}}\subseteq\mathbb{C}$ with $\lambda^D\rightarrow0$ (on the countable subset $\{D\in\mathcal{L}:\lambda_D\neq0\}$).

Now suppose that $A$ is a non-separable $C^*$-subalgebra of $L$ which is commutative. We will get a contradiction with the property of a Luzin family from Lemma \ref{luzin-property}. 
 For each $n\in \N$, let $q(n)\in\mathcal{P}^1$ be such that $a(n)q(n)=q(n)a(n)$, for each $a\in A$.  
It exists since any nonzero commutative subalgebra of $\mathcal M_2$ must contain rank one projections
by the functional calculus argument.
By \ref{pq}, replacing $q(n)$ with $q(n)^\perp$ if necessary, 
we can also assume $||q(n)-\mathbf{p}||^2\leq\frac{1}{2}$. Note that
$q$ is in  $\ell_\infty(\mathcal M_2)$ but not necessarily in $L$ nor $A$.

Say $a\in A$ and $\lambda^D_a\neq0$ for some $D\in \mathcal L$.  As $aq=qa$, we have
$\pi(aq)=\pi(qa)$ and hence $\pi(aqP_D)=\pi(qaP_D)$.  As $\pi(P_DP_E)=0$, for all $E\in\mathcal{L}\setminus\{D\}$, this means that $\pi(\lambda^D_ap_DqP_D)=\pi(q\lambda^D_ap_DP_D)$ and hence $\pi(p_DqP_D)=\pi(qp_DP_D)$, i.e. $\pi(p_DP_D)$ and $\pi(qP_D)$ commute.  But $||q-\mathbf{p}||^2\leq\frac{1}{2}$ and $||p_D-\mathbf{p}||\leq\frac{1}{4}$ and hence $||q-p_D||<1$.  Thus $||\pi(qP_D)-\pi(p_DP_D)||\leq||(q-p_D)P_D||<1$ and hence $\pi(qP_D)=\pi(p_DP_D)$, by Lemma \ref{commuting||p-q||<1}.  So $\lim_{n\in D}||q(n)-p_D(n)||=0$.

As $A$ is non-separable, we must have  uncountably many  distinct $D\in \mathcal L$ for which
there is $a\in A$ with  $\lambda^D_a\neq0$.  This already will give as the desired contradiction.
 Thus we have uncountable $\mathcal{L}'\subseteq\mathcal{L}$ with $\lim_{n\in D}||q(n)-p_D(n)||=0$, for all $D\in\mathcal{L}'$.  As $\mathcal{P}^1$ is a separable metric space, $(p_D)_{D\in\mathcal{L}'}$ must have at least two distinct condensation points $r$ and $s$, i.e. such that every neighbourhood of $r$ and $s$ contain uncountably many $(p_D)_{D\in\mathcal{L}'}$.  Let
\begin{align*}
\mathcal{E}&=\{D\in\mathcal{L}':||p_D-r||<\tfrac{1}{2}||r-s||\}\text{ and}\\
\mathcal{F}&=\{D\in\mathcal{L}':||p_D-s||<\tfrac{1}{2}||r-s||\}.
\end{align*}
By the triangle inequality, $\mathcal{E}$ and $\mathcal{F}$ are disjoint, as are
\begin{align*}
X&=\{n\in\mathbb{N}:||q(n)-r||<\tfrac{1}{2}||r-s||\}\text{ and}\\
Y&=\{n\in\mathbb{N}:||q(n)-s||<\tfrac{1}{2}||r-s||\}.
\end{align*}
As $\lim_{n\in D}||q(n)-p_D(n)||=0$, for all $D\in\mathcal{L}'$, we see that $E\subseteq^*X$, for all $E\in\mathcal{E}$ and $F\subseteq^*Y$, for all $F\in\mathcal{F}$.  This contradicts Proposition \ref{luzin-property}.
\end{proof}

\section{The Farah-Wofsey Construction}

The following construction, based on \cite{farah-wofsey} Theorem 5.35, yields a family of projections in $\mathcal{B}(H)$ where $H$ is a separable Hilbert space with a Luzin-like property with respect to $(K_n)_{n\in \N}\subseteq\mathcal{K}(H)$.

\begin{lemma}\label{fw-lemma}
For every $(K_n)_{n\in \N}\subseteq\mathcal{K}(H)$ and every $0<\epsilon<1/2$ there are infinite rank projections $(P_\alpha)_{\alpha\in\aleph_1}\subseteq\mathcal{B}(H)$ such that, for all distinct $\alpha,\beta\in\aleph_1$, $P_\alpha P_\beta\in\mathcal{K}(H)$ and, for all $\beta\in\aleph_1$, all $n\in\N$, and all but possibly $n$ many $\alpha\in\beta$,
\[||(P_\alpha+K_n)(P_\beta+K_n)-(P_\beta+K_n)(P_\alpha+K_n)||\geq\epsilon.\]
\end{lemma}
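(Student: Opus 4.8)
The plan is to build the projections $(P_\alpha)_{\alpha\in\aleph_1}$ by a recursion along $\omega_1$ that simultaneously encodes a Luzin family into the geometry of $H$. First I would fix an orthonormal basis $(e_n)_{n\in\N}$ of $H$ and, starting from a Luzin family $\mathcal{L}=\{D_\alpha:\alpha<\omega_1\}$ (which exists in ZFC by the references in Section 1), attempt to let $P_\alpha$ be a projection whose range is close to $\overline{\text{span}}\{e_n:n\in D_\alpha\}$. The almost disjointness $D_\alpha\cap D_\beta=^*\emptyset$ would immediately give $P_\alpha P_\beta\in\mathcal{K}(H)$, since two projections onto spans of basis vectors over almost disjoint index sets have almost orthogonal ranges, so their product is finite rank modulo a compact error. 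The infinite rank of each $P_\alpha$ is guaranteed by $|D_\alpha|=\aleph_0$. Thus the easy half of the statement — the pairwise compactness of products — is essentially the combinatorial content of almost disjointness transported to Hilbert space, and I would dispose of it first.

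The substantive half is the non-commutativity estimate, and here is where the recursion must do real work. I would not take $P_\alpha$ to be literally the basis projection onto $D_\alpha$; instead, at each stage $\beta$, I would \emph{tilt} the projection $P_\beta$ slightly relative to the basis, choosing its range to be spanned by vectors of the form $e_n$ for $n$ in a cofinite subset of $D_\beta$ together with finitely many perturbed vectors that are rotated by a fixed angle (governed by $\epsilon$) against the diagonal, in the spirit of the rank-one perturbations in the Akemann-Doner argument of Section 2. The point of the tilt is that if $P_\alpha$ and $P_\beta$ were simply basis projections they would commute exactly, whereas a small controlled rotation on the overlap $D_\alpha\cap D_\beta$ produces a commutator of norm bounded below by roughly the sine of the tilt angle, hence by $\epsilon$. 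The Luzin property — that for each $\alpha$ and each $k$ only finitely many $\beta<\alpha$ satisfy $D_\beta\cap D_\alpha\subseteq\{0,\dots,k\}$ — is exactly what ensures the overlaps $D_\alpha\cap D_\beta$ extend beyond any fixed finite initial segment for all but finitely many $\alpha<\beta$, so that the compact perturbations $K_n$ (which only affect the first coordinates appreciably, up to small tails) cannot wash out the commutator on more than $n$ many indices $\alpha$.

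The main obstacle I anticipate is the bookkeeping that ties the exceptional set ``all but possibly $n$ many $\alpha\in\beta$'' to the number $n$ indexing the compact operator $K_n$. The mechanism should be: given $K_n$, choose $k=k(n)$ large enough that $K_n$ is within $\epsilon/3$ (say) of its compression to the first $k$ coordinates, so that outside $\{0,\dots,k\}$ the operator $P_\alpha+K_n$ behaves, up to small error, like $P_\alpha$ itself. By the Luzin property only finitely many $\alpha<\beta$ have $D_\alpha\cap D_\beta\subseteq\{0,\dots,k\}$; I would need to arrange the construction so that this finite number is at most $n$, which forces a careful matching between the enumeration of the Luzin family and a monotonicity requirement on $k(n)$. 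For every \emph{other} $\alpha<\beta$ the overlap $D_\alpha\cap D_\beta$ contains an index beyond $k$, and there the tilted geometry yields $\|[P_\alpha,P_\beta]\|\geq\epsilon'$ for a fixed $\epsilon'>\epsilon$, with the $K_n$ contributions absorbed into the slack. The delicate estimate is therefore the commutator lower bound on a single shared coordinate under the rank-one tilt, which I would reduce to the two-by-two matrix computation underlying equation \eqref{pq}, and then verify that the compact tails and the off-diagonal cross terms between $P_\alpha$ and $K_n$ stay below the margin $\epsilon'-\epsilon$.
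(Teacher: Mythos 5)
Your geometric intuition (tilted vectors placed beyond the essential support of the $K_n$'s, with Luzin-style counting of exceptions) matches the spirit of the paper, but two concrete steps in your plan fail.

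First, \textbf{finitely many tilted vectors per projection cannot work}, and this is provable within your own mechanism. Fix $\beta\geq\omega$, so that there are infinitely many $\alpha<\beta$. For each $n$, all but at most $n$ of these $\alpha$ must satisfy $\|(P_\alpha+K_n)(P_\beta+K_n)-(P_\beta+K_n)(P_\alpha+K_n)\|\geq\epsilon$, and your mechanism produces this bound only from a tilt located beyond the threshold $k(n)$ (since below $k(n)$ you have no control over what $K_n$ does). If every $P_\alpha$ had all its tilts inside $\{0,\dots,M_\alpha\}$ for some finite $M_\alpha$, then for each $n$ the set $\{\alpha<\omega_1: M_\alpha\leq k(n)\}$ would have to have size at most $n$ (those $\alpha$ have no witness beyond $k(n)$ against any later $\beta$), whence $\{\alpha<\omega_1: M_\alpha<\infty\}=\bigcup_n\{\alpha: M_\alpha\leq k(n)\}$ would be countable --- contradicting that it is all of $\omega_1$. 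So each projection needs tilts at arbitrarily large coordinates; in the paper the new projection $P_\gamma$ is in fact spanned \emph{entirely} by tilted vectors $e_{i(m)}+e_{j(m)}$, with one tilt aimed at each previously constructed projection (via $i(m)\in A_m$, $j(m)\notin A_m$, both beyond $k_m$), and the ``at most $n$ exceptions'' bound comes from demanding the estimate for all $l\leq m$ when placing the $m$-th tilt --- not from a Luzin family of subsets of $\N$.

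Second, and more structurally, \textbf{the fixed-basis-plus-Luzin-family transport does not cohere across the recursion}. Once $P_\alpha$ carries infinitely many tilts, it is no longer (even modulo compacts) the basis projection onto $\overline{\mathrm{span}}\{e_n: n\in D_\alpha\}$ for your original basis, so the intersection combinatorics $D_\alpha\cap D_\beta$ of a family fixed in advance no longer tells you where a later tilt must be aimed in order to fail to commute with the \emph{actual} operator $P_\alpha$: you would need a coordinate where $P_\alpha$ still acts like a basis projection, and nothing in the pre-fixed family guarantees the finite set $D_\alpha\cap D_\beta$ contains such a coordinate beyond $k(n)$. The paper resolves exactly this circularity by not fixing a basis at all: at each stage $\gamma$ it invokes Farah--Wofsey Lemma 5.34 to choose a \emph{new} orthonormal basis (depending on $\gamma$) in which all previously constructed $P_\alpha$, $\alpha<\gamma$, are simultaneously basis projections $p_{A_m}$ modulo compacts, and then builds the tilted $P_\gamma$ relative to that basis (with the compactness of products $P_{\alpha_m}P_\gamma$ preserved by taking $i(m),j(m)$ outside $\bigcup_{n<m}A_n$). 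This simultaneous-diagonalization lemma is the key ingredient your proposal is missing; without it, or a substitute for it, the recursion as you describe it cannot be continued past the first $\omega$ stages.
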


\paragraph{Proof:}  We construct $(P_\alpha)_{\alpha\in\aleph_1}$ by recursion, as follows.  Let $P_0$ be any infinite rank projection with $P_0^\perp$ infinite rank too.  
Assume, for some $\gamma\in\aleph_1\backslash\{0\}$, $(P_\alpha)_{\alpha\in\gamma}$
 has already been constructed such that $\bigvee_{\alpha\in F}\pi(P_\alpha)\neq1$, for all
 finite $F\subseteq\gamma$, and, for all distinct $\alpha,\beta\in\gamma$, 
$P_\alpha P_\beta\in\mathcal{K}(H)$.  In particular, $\pi(P_\alpha)$ and $\pi(P_\beta)$
 commute for all $\alpha,\beta\in\gamma$ and hence, by \cite{farah-wofsey} Lemma 5.34, there exists an orthonormal basis $(e_n)_{n\in \N}$ of $H$ and $(A_n)_{n\in \N}\subseteq\wp(\N)$ 
such that $\pi(p_{A_n})=\pi(P_{\alpha_n})$ for all $n\in\min(\gamma,\omega)$, where $n\mapsto\alpha_n$ is any 
fixed one-to-one mapping of $\min(\gamma,\omega)$ onto $\gamma$ and $p_X$ denotes the projection onto $\overline{\mathrm{span}}(e_n)_{n\in X}$  for $X\subseteq \N$.

Take $\delta>0$ with $\epsilon\leq\frac{1}{2}-(\frac{1}{\sqrt{2}}+2)\delta$, and recursively define an increasing sequence $(k_n)_{n\in\min(\gamma,\omega)}\subseteq\N$ as follows.  Let $k_0$ be large enough that
\begin{eqnarray*}
||(P_{\alpha_0}+K_0-p_{A_0})p^\perp_{k_0}|| &<& \delta\qquad\textrm{and}\\
||((P_{\alpha_0}+K_0)K_0-K_0(P_{\alpha_0}+K_0))p^\perp_{k_0}|| &<& \delta.
\end{eqnarray*}
Once $(k_n)_{n\leq m}$ has been defined, let $k_{m+1}>k_m$ be large enough that there exists distinct $i(m),j(m)\in k_{m+1}\backslash(k_m\cup\bigcup_{n\in m}A_n)$ such that $j(m)\notin A_m$ and $i(m)\in A_m$ and, for all $l\leq m+1$,
\begin{eqnarray*}
||(P_{\alpha_{m+1}}+K_l-p_{A_{m+1}})p^\perp_{k_{m+1}}|| &<& \delta,\\
||((P_{\alpha_{m+1}}+K_l)K_l-K_l(P_{\alpha_{m+1}}+K_l))p^\perp_{k_{m+1}}|| &<& \delta.
\end{eqnarray*}
Note that, for sufficiently large $k_{m+1}$, there will always exist such a $j(m)$ because
 $\N\backslash\bigcup_{n\leq m}A_n$ is infinite which, in turn, 
follows from the fact that $\bigvee_{n\leq m}\pi(p_{A_n})=
\bigvee_{n\leq m}\pi(P_{\alpha_n})\neq1$.  If $\gamma$ is finite then simply let $i(m)$ and $j(m)$ all
 be distinct elements of $\N\backslash\bigcup_{n\in\gamma}A_n$, for $m\geq\gamma$.

Now let $P_\gamma$ be the projection onto $\overline{\mathrm{span}}\{e_{i(n)}+e_{j(n)}:n\in\N\}$.  
Note that, for all $m\in\min(\gamma,\omega)$ and $n>m$, we have $i(n),j(n)\notin A_m$ so \[p_{A_m}P_\gamma[H]\subseteq\mathrm{span}(\{e_{i(n)}:n\leq m\}\cup\{e_{j(n)}:n\leq m\})\] so $P_{\alpha_m}P_\gamma\in\mathcal{K}(H)$.  Also  $P_\gamma e_{i(m)}=\frac{1}{2}(e_{i(m)}+e_{j(m)})$.  For any $m\in\min(\gamma,\omega)$, $i(m)\in A_m$ and so $p_{A_m}(e_{i(m)}+e_{j(m)})=e_{i(m)}=p_{A_m}e_{i(m)}$ and hence, for all $l\leq m$,

$$||(P_{\alpha_m}+K_l)P_\gamma e_{i(m)}-\tfrac{1}{2}e_{i(m)}||=$$
$$= ||(P_{\alpha_m}+K_l)(\tfrac{1}{2}(e_{i(m)}+e_{j(m)}))-p_{A_m}(\tfrac{1}{2}(e_{i(m)}+e_{j(m)}))||=$$
$$=  ||(P_{\alpha_m}+K_l-p_{A_m})(\tfrac{1}{2}(e_{i(m)}+e_{j(m)}))||
\leq\delta||e_{i(m)}+e_{j(m)}||/2
\leq \delta/\sqrt{2},$$
and
$$||P_\gamma(P_{\alpha_m}+K_l)e_{i(m)}-\tfrac{1}{2}(e_{i(m)}+e_{j(m)})||
= ||P_\gamma(P_{\alpha_m}+K_l)e_{i(m)}-P_\gamma p_{A_m}e_{i(m)}||\leq$$
$$\leq ||(P_{\alpha_m}+K_l-p_{A_m})e_{i(m)}||
\leq\delta.$$
so
$$||(P_{\alpha_m}+K_l)(P_\gamma+K_l)e_{i(m)}-(P_\gamma+K_l)(P_{\alpha_m}+K_l)e_{i(m)}||\geq$$
$$\geq ||\tfrac{1}{2}e_{j(m)}||-||(P_{\alpha_m}+K_l)P_\gamma e_{i(m)}-\tfrac{1}{2}e_{i(m)}||
 -||P_\gamma(P_{\alpha_m}+K_l)e_{i(m)}-\tfrac{1}{2}(e_{i(m)}+e_{j(m)})||+$$
$$-||((P_{\alpha_m}+K_l)K_l-K_l(P_{\alpha_m}+K_l))e_{i(m)}||
\geq 1/2-\delta/\sqrt{2}-\delta-\delta
\geq \epsilon.$$
Thus $(P_{\alpha})_{\alpha\leq\gamma}$ satisfies the required conditions.

Also note that if $\gamma$ is finite then the projection $Q$ onto $\overline{\mathrm{span}}(e_{i(n)}-e_{j(n)})_{n\geq\gamma}$ is orthogonal to $P_\gamma$ and $p_{A_n}$, for each $n\in\gamma$.  This means $P_\gamma\leq Q^\perp$ and $p_{A_n}\leq Q^\perp$, for each $n<m$, and hence $\pi(P_\alpha)\leq\pi(Q^\perp)$ for all $\alpha\leq\gamma$ which means $\bigvee_{\alpha\leq\gamma}\pi(P_\alpha)\leq\pi(Q^\perp)<1$.  Thus the recursion can be continued for finite $\gamma$.  On the other hand, if $\gamma$ is not finite and $(P_\alpha)_{\alpha<\gamma}$ has already been constructed then, for any $F\in[\gamma]^{<\aleph_0}$, we can find $\beta\in\gamma\backslash F$ and then $\bigvee_{\alpha\in F}\pi(P_\alpha)\leq\pi(P^\perp_\beta)<1$, so the recursion can also be continued for infinite $\gamma$. $\Box$\\

\begin{theorem}\label{Farah-WofseyConstruction}
There are $\aleph_1$ orthogonal projections in $\mathcal{B}(H)/\mathcal{K}(H)$ containing no uncountable subset that simultaneously lifts to commuting projections in $\mathcal{B}(H)$.
\end{theorem}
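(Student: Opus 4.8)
The plan is to apply Lemma \ref{fw-lemma} with the countable family $(K_n)_{n\in\N}$ taken to be a norm-dense subset of the separable space $\mathcal{K}(H)$ and with $\epsilon=\tfrac14$ (any $\epsilon\in(0,\tfrac12)$ would do). The resulting infinite-rank projections $(P_\alpha)_{\alpha\in\aleph_1}$ satisfy $P_\alpha P_\beta\in\mathcal{K}(H)$ for distinct $\alpha,\beta$, so $\pi(P_\alpha)\pi(P_\beta)=\pi(P_\alpha P_\beta)=0$ and, taking adjoints, $\pi(P_\beta)\pi(P_\alpha)=0$ as well. Hence $(\pi(P_\alpha))_{\alpha\in\aleph_1}$ is an $\aleph_1$-sized family of orthogonal projections in $\mathcal{B}(H)/\mathcal{K}(H)$, necessarily distinct since each $P_\alpha$ has infinite rank. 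This is the family I claim enjoys the required non-liftability property.

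Toward a contradiction, I would suppose that some uncountable $S\subseteq\aleph_1$ yields a subfamily $(\pi(P_\alpha))_{\alpha\in S}$ that lifts to commuting projections $(Q_\alpha)_{\alpha\in S}\subseteq\mathcal{B}(H)$, and write $K_\alpha=Q_\alpha-P_\alpha\in\mathcal{K}(H)$. The key point is to reduce these uncountably many, potentially distinct, compact perturbations to a single member of the fixed dense family $(K_n)_{n\in\N}$. Fixing a tolerance $\delta>0$ with $4\delta+2\delta^2<\epsilon$, density lets me choose for each $\alpha\in S$ an index $n(\alpha)$ with $||K_\alpha-K_{n(\alpha)}||<\delta$; since $S$ is uncountable and $\N$ is countable, a pigeonhole argument produces an uncountable $S'\subseteq S$ and a single $n^*\in\N$ with $n(\alpha)=n^*$, hence $||K_\alpha-K_{n^*}||<\delta$, for every $\alpha\in S'$.

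Now I would pick $\beta\in S'$ large enough that $S'\cap\beta$ has more than $n^*$ elements; enumerating $S'$ increasingly, any $\beta$ from the $(n^*+2)$-th element on works. By Lemma \ref{fw-lemma} applied with $n=n^*$, at most $n^*$ many ordinals $\alpha\in\beta$ fail the inequality $||(P_\alpha+K_{n^*})(P_\beta+K_{n^*})-(P_\beta+K_{n^*})(P_\alpha+K_{n^*})||\geq\epsilon$, so at least one $\alpha\in S'\cap\beta$ must satisfy it. For this pair $\alpha,\beta\in S'$, I would write $P_\gamma+K_{n^*}=Q_\gamma+(K_{n^*}-K_\gamma)$ for $\gamma\in\{\alpha,\beta\}$ and expand the commutator, using $Q_\alpha Q_\beta=Q_\beta Q_\alpha$ to cancel the leading term and bounding the three remaining terms by $2\delta+2\delta+2\delta^2$; since each $Q_\gamma$ has norm $1$ and $||K_{n^*}-K_\gamma||<\delta$, this gives $||(P_\alpha+K_{n^*})(P_\beta+K_{n^*})-(P_\beta+K_{n^*})(P_\alpha+K_{n^*})||\leq 4\delta+2\delta^2<\epsilon$, contradicting the inequality just obtained.

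The main obstacle is precisely the reduction in the middle step: a commuting lift is only required to commute, not to share a common compact perturbation, so the rigid appearance of the same $K_n$ on both factors in Lemma \ref{fw-lemma} cannot be invoked directly. Separability of $\mathcal{K}(H)$ together with the lemma's ``all but possibly $n$ many $\alpha\in\beta$'' clause is what resolves this — the clause guarantees the witness $\alpha$ can be found inside the uncountable set $S'\cap\beta$ rather than merely among all ordinals below $\beta$, since we need only discard the finitely many ($n^*$) exceptional indices.
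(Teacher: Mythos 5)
Your proposal is correct and follows essentially the same route as the paper's proof: apply Lemma \ref{fw-lemma} to a norm-dense sequence $(K_n)_{n\in\N}$ in $\mathcal{K}(H)$, use a pigeonhole argument to pass to an uncountable subfamily whose compact perturbations are all $\delta$-close to a single $K_{n^*}$, and then contradict the lemma's commutator lower bound by expanding the commutator around the commuting lifts. The only (valid) deviation is that you exploit the fact that the lifts $Q_\alpha$ are projections, so $||Q_\alpha||\leq 1$, which lets you skip the paper's preliminary step of shrinking to an uncountable subset on which the norms $||K'_\alpha||$ are uniformly bounded by some $M$ — a step the paper needs because its argument rules out the slightly more general situation of lifts to arbitrary commuting operators, not just commuting projections.
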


\paragraph{Proof:}  Take dense $(K_n)_{n\in \N}\subseteq\mathcal{K}(H)$ and $\epsilon\in(0,1/2)$ and let $(P_\alpha)_{\alpha\in\aleph_1}$ be obtained from Lemma \ref{fw-lemma}.  Assume that we have some uncountable $A\subseteq\aleph_1$ and $(K'_\alpha)_{\alpha\in A}\subseteq\mathcal{K}(H)$ such that $(P_\alpha+K'_\alpha)_{\alpha\in A}$ commute.  By replacing $A$ with an uncountable subset of $A$ if necessary, we may assume that there exists some $M\in\mathbb{R}$ such that $||K'_\alpha||\leq M$ for all $\alpha\in A$.  Take $\delta>0$ with $2\delta(1+M)+2(1+M+\delta)\delta<\epsilon$, and pick $n_\alpha\in\N$ such that $||K_{n_\alpha}-K'_\alpha||\leq\delta$, for all $\alpha\in A$.  Again replacing $A$ with an uncountable subset of $A$ if necessary, we may assume that there exists some $n\in\N$ such that $K_{n_\alpha}=K_n$ for all $\alpha\in A$.  Then, for any $\beta\in A$ and $\alpha\in A\cap\beta$, as we have $(P_\alpha+K'_\alpha)(P_\beta+K'_\beta)-(P_\beta+K'_\beta)(P_\alpha+K'_\alpha)=0$, we obtain
\begin{eqnarray*}
&& ||(P_\alpha+K_n)(P_\beta+K_n)-(P_\beta+K_n)(P_\alpha+K_n)||\\
&\leq& ||K'_\alpha-K_n||||P_\beta+K_n||+||P_\alpha+K'_\alpha||||K'_\beta-K_n||\\
&&+\ ||K'_\beta-K_n||||P_\alpha+K_n||+||P_\beta+K'_\beta||||K'_\alpha-K_n||\\
&\leq& \delta(1+M)+(1+M+\delta)\delta+\delta(1+M)+(1+M+\delta)\delta\\
&<& \epsilon.
\end{eqnarray*}
But for any $\beta$ such that $A\cap\beta$ contains more than $n$ elements, this contradicts the defining property of the $(P_\alpha)_{\alpha\in\aleph_1}$. $\Box$\\

\bibliographystyle{amsplain}

\end{document}